\documentclass[12pt]{amsart}
\usepackage{amsmath,amssymb,amsbsy,amsfonts,latexsym,amsopn,amstext,cite,
                                               amsxtra,euscript,amscd,bm}
\usepackage{url}
\usepackage{mathrsfs}

\usepackage{color}
\usepackage[colorlinks,linkcolor=blue,anchorcolor=blue,citecolor=blue,backref=page]{hyperref}
\usepackage{color}
\usepackage{graphics,epsfig}
\usepackage{graphicx}
\usepackage{float}
\usepackage{epstopdf}
\hypersetup{breaklinks=true}

\usepackage[top=1in, bottom=1in, left=1in, right=1in]{geometry}

\usepackage[np]{numprint}
\npdecimalsign{\ensuremath{.}}

\def\Th1{\varTheta}

\usepackage{bibentry}
\usepackage{bbm}

\usepackage[english]{babel}
\usepackage{mathtools}
\usepackage{todonotes}
\usepackage{url}
\usepackage[colorlinks,linkcolor=blue,anchorcolor=blue,citecolor=blue,backref=page]{hyperref}

\usepackage[norefs,nocites]{refcheck}

\begin{document}

\newtheorem{theorem}{Theorem}
\newtheorem{lemma}[theorem]{Lemma}
\newtheorem{claim}[theorem]{Claim}
\newtheorem{cor}[theorem]{Corollary}
\newtheorem{conj}[theorem]{Conjecture}
\newtheorem{prop}[theorem]{Proposition}
\newtheorem{definition}[theorem]{Definition}
\newtheorem{question}[theorem]{Question}
\newtheorem{example}[theorem]{Example}
\newcommand{\hh}{{{\mathrm h}}}
\newtheorem{remark}[theorem]{Remark}

\numberwithin{equation}{section}
\numberwithin{theorem}{section}
\numberwithin{table}{section}
\numberwithin{figure}{section}

\def\sssum{\mathop{\sum\!\sum\!\sum}}
\def\ssum{\mathop{\sum\ldots \sum}}
\def\iint{\mathop{\int\ldots \int}}

\newcommand{\diam}{\operatorname{diam}}

\def\squareforqed{\hbox{\rlap{$\sqcap$}$\sqcup$}}
\def\qed{\ifmmode\squareforqed\else{\unskip\nobreak\hfil
\penalty50\hskip1em \nobreak\hfil\squareforqed
\parfillskip=0pt\finalhyphendemerits=0\endgraf}\fi}

\newfont{\teneufm}{eufm10}
\newfont{\seveneufm}{eufm7}
\newfont{\fiveeufm}{eufm5}
%
%
\newfam\eufmfam
     \textfont\eufmfam=\teneufm
\scriptfont\eufmfam=\seveneufm
     \scriptscriptfont\eufmfam=\fiveeufm
%
%
\def\frak#1{{\fam\eufmfam\relax#1}}

\newcommand{\bflambda}{{\boldsymbol{\lambda}}}
\newcommand{\bfmu}{{\boldsymbol{\mu}}}
\newcommand{\bfxi}{{\boldsymbol{\eta}}}
\newcommand{\bfrho}{{\boldsymbol{\rho}}}

\def\eps{\varepsilon}

\def\fK{\mathfrak K}
\def\fT{\mathfrak{T}}
\def\fL{\mathfrak L}
\def\fR{\mathfrak R}

\def\fA{{\mathfrak A}}
\def\fB{{\mathfrak B}}
\def\fC{{\mathfrak C}}
\def\fM{{\mathfrak M}}
\def\fS{{\mathfrak  S}}
\def\fU{{\mathfrak U}}
\def\fW{{\mathfrak W}}

\def\T {\mathsf {T}}
\def\Tor{\mathsf{T}_d}
\def\Tore{\widetilde{\mathrm{T}}_{d} }

\def\sM {\mathsf {M}}

\def\ss{\mathsf {s}}

\def\Kmnd{\cK_d(m,n)}
\def\Kmnp{\cK_p(m,n)}
\def\Kmnq{\cK_q(m,n)}

\def \balpha{\bm{\alpha}}
\def \bbeta{\bm{\beta}}
\def \bgamma{\bm{\gamma}}
\def \bdelta{\bm{\delta}}
\def \bzeta{\bm{\zeta}}
\def \blambda{\bm{\lambda}}
\def \bchi{\bm{\chi}}
\def \bphi{\bm{\varphi}}
\def \bpsi{\bm{\psi}}
\def \bnu{\bm{\nu}}
\def \bomega{\bm{\omega}}

\def \bell{\bm{\ell}}

\def\eqref#1{(\ref{#1})}

\def\vec#1{\mathbf{#1}}

\newcommand{\abs}[1]{\left| #1 \right|}

\def\Zq{\mathbb{Z}_q}
\def\Zqx{\mathbb{Z}_q^*}
\def\Zd{\mathbb{Z}_d}
\def\Zdx{\mathbb{Z}_d^*}
\def\Zf{\mathbb{Z}_f}
\def\Zfx{\mathbb{Z}_f^*}
\def\Zp{\mathbb{Z}_p}
\def\Zpx{\mathbb{Z}_p^*}
\def\cM{\mathcal M}
\def\cE{\mathcal E}
\def\cH{\mathcal H}

\def\le{\leqslant}

\def\ge{\geqslant}

\def\sfB{\mathsf {B}}
\def\sfC{\mathsf {C}}
\def\L{\mathsf {L}}
\def\FF{\mathsf {F}}

\def\sE {\mathscr{E}}
\def\sS {\mathscr{S}}

\def\cA{{\mathcal A}}
\def\cB{{\mathcal B}}
\def\cC{{\mathcal C}}
\def\cD{{\mathcal D}}
\def\cE{{\mathcal E}}
\def\cF{{\mathcal F}}
\def\cG{{\mathcal G}}
\def\cH{{\mathcal H}}
\def\cI{{\mathcal I}}
\def\cJ{{\mathcal J}}
\def\cK{{\mathcal K}}
\def\cL{{\mathcal L}}
\def\cM{{\mathcal M}}
\def\cN{{\mathcal N}}
\def\cO{{\mathcal O}}
\def\cP{{\mathcal P}}
\def\cQ{{\mathcal Q}}
\def\cR{{\mathcal R}}
\def\cS{{\mathcal S}}
\def\cT{{\mathcal T}}
\def\cU{{\mathcal U}}
\def\cV{{\mathcal V}}
\def\cW{{\mathcal W}}
\def\cX{{\mathcal X}}
\def\cY{{\mathcal Y}}
\def\cZ{{\mathcal Z}}
\newcommand{\rmod}[1]{\: \mbox{mod} \: #1}

\def\cg{{\mathcal g}}

\def\vy{\mathbf y}
\def\vr{\mathbf r}
\def\vx{\mathbf x}
\def\va{\mathbf a}
\def\vb{\mathbf b}
\def\vc{\mathbf c}
\def\ve{\mathbf e}
\def\vh{\mathbf h}
\def\vk{\mathbf k}
\def\vm{\mathbf m}
\def\vz{\mathbf z}
\def\vu{\mathbf u}
\def\vv{\mathbf v}

\def\e{{\mathbf{\,e}}}
\def\ep{{\mathbf{\,e}}_p}
\def\eq{{\mathbf{\,e}}_q}

\def\Tr{{\mathrm{Tr}}}
\def\Nm{{\mathrm{Nm}}}

 \def\SS{{\mathbf{S}}}

\def\lcm{{\mathrm{lcm}}}

 \def\0{{\mathbf{0}}}

\def\({\left(}
\def\){\right)}
\def\l|{\left|}
\def\r|{\right|}
\def\fl#1{\left\lfloor#1\right\rfloor}
\def\rf#1{\left\lceil#1\right\rceil}
\def\fl#1{\left\lfloor#1\right\rfloor}
\def\ni#1{\left\lfloor#1\right\rceil}
\def\sumstar#1{\mathop{\sum\vphantom|^{\!\!*}\,}_{#1}}

\def\mand{\qquad \mbox{and} \qquad}

\def\tblue#1{\begin{color}{blue}{{#1}}\end{color}}




\hyphenation{re-pub-lished}

\mathsurround=1pt

\def\bfdefault{b}

\def \F{{\mathbb F}}
\def \K{{\mathbb K}}
\def \N{{\mathbb N}}
\def \Z{{\mathbb Z}}
\def \P{{\mathbb P}}
\def \Q{{\mathbb Q}}
\def \R{{\mathbb R}}
\def \C{{\mathbb C}}
\def\Fp{\F_p}
\def \fp{\Fp^*}

 \def \xbar{\overline x}

\title[Polynomial Corners Over Finite Fields]
{Polynomial Corners Over Finite Fields}

\author [L. P. Wijaya]{Laurence P. Wijaya}
\address{Department of Mathematics, University of Kentucky, 715 Patterson Office Tower, Lexington, KY 40506, USA}
\email{laurence.wijaya@uky.edu}

\begin{abstract}  Recently there has been some progress in understanding the density of a subset of $[N]^2$ that avoids polynomial patterns. Kravitz, Kuca, and Leng showed that if $P\in\Z[z]$ satisfies certain conditions, then any set $A\subseteq[N]^2$ does not contain $(x,y),(x+P(z),y),(x,y+P(z))$, we must have 
 \[
 |A|\ll_P\frac{N^2}{(\log\log\log N)^c}
 \]
 for some small constant $c$. 
 
 In this article, we show a similar result in $(\F_p)^2$ where we get a better bound on the density of a set $A\subseteq (\F_p)^2$ not containing $(x,y),(x+P(z),y),(x,y+P(z))$ with some conditions on $P\in \F_p[z]$.
 \end{abstract}

\keywords{Polynomial Progression, Finite Fields, Transference Principle}
\subjclass{11B30, 11T06, 11T23, 12E20}

\maketitle


\section{Introduction}
Roth's theorem in arithmetic progression started the study of patterns in dense subsets of the set of natural numbers. Roth showed that every sufficiently dense subset of the set of natural numbers contains a non-trivial arithmetic progression of length $3$, that is, a pattern of the form $x,x+d,x+2d$ where both $x,d$ are positive integers. On the other hand, one can study a similar phenomenon in higher dimensions. In 1974, Ajtai and Szemer\'{e}di \cite{AS} showed that every sufficiently dense subset of $\N^2$ contains a pattern of the form $(x,y),(x+d,y),(x,y+d)$ with $x,y,d\in\N$, which we call \textit{linear corner}.

Around thirty years ago, Bergelson and Leibman \cite{BL} proved the following statement : if $\textbf{P}_1,\dots,\textbf{P}_k\in\Z^n[z]$ satisfying certain conditions, then every subset of $\Z^n$ with positive upper Banach density must contain a non-trivial pattern of the form
\[
\textbf{x},\textbf{x}+\textbf{P}_1(z),\dots,\textbf{x}+\textbf{P}_k(z).
\]
As a special case, this means that every subset of $\N^2$ with upper positive Banach density contains a pattern of the form $(x,y),(x+P_1(z),y),(x,y+P_2(z))$ with $x,y,z\in\N$ for $P_1,P_2\in \N[z]$ satisfying certain conditions. In recent years, there has been some interest in obtaining a quantitative version of this theorem. The best estimate for the case $P_1=P_2$ is due to Kravitz, Kuca, and Leng \cite{KKL}. In fact, by combining their proof with \cite[Theorem 1.1]{JLLOS}, which we mention again as Theorem \ref{lem:lincor}, we have the following improvement.
\begin{theorem}
    Let $P\in\Z[z]$ be a polynomial with an integer root of multiplicity $1$. If $A\subseteq [N]^2$ where $[N]:=\{1,\dots,N\}$ does not contain a configuration of the form $(x,y),(x+P(z),y),(x,y+P(z))$ with $P(z)\neq 0$, then
    \[
    |A|\ll_P\frac{N^2}{\exp((\log\log N)^{c_P})}
    \]
    for some constant $c_P>0$ depending only on the polynomial $P$.
\end{theorem}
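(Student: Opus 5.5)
The plan is to follow the Kravitz--Kuca--Leng argument \cite{KKL} and change only its final step, where they quote a linear-corners bound. There the linear-corners input with $(\log\log N)^{-c}$-type savings is replaced by the quasi-polynomial bound of \cite[Theorem 1.1]{JLLOS} (Theorem \ref{lem:lincor}). Write $\delta=|A|/N^2$, and suppose $A\subseteq[N]^2$ contains no nontrivial configuration $(x,y),(x+P(z),y),(x,y+P(z))$.

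\emph{Step 1: normalize the polynomial.} Let $z_0$ be the simple integer root of $P$. Then $P(z_0+z)=z\,Q(z)$ with $Q\in\Z[z]$ and $Q(0)=P'(z_0)=:a\neq 0$. Restrict $z$ to a progression $z=qm$ with $q$ a suitable multiple of $a$. Then $P(z_0+qm)/(qa)$ is an integer polynomial in $m$ of the form $m+(\text{higher-order terms with coefficients divisible by } q)$. Passing to a subprogression of $x$ and $y$ modulo $qa$ with density at least $\delta$, rescaling, and losing only a constant factor, we may assume $P(z)=z+z^2R(z)$ with $R\in\Z[z]$. This is the \emph{local linearity} that KKL exploit.

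\emph{Step 2: degree lowering and PET.} Count the configurations by the normalized operator
\[
\Lambda(f_0,f_1,f_2)=\mathbb{E}_{x,y}\,\mathbb{E}_{z\in[M]}\,f_0(x,y)f_1(x+P(z),y)f_2(x,y+P(z)).
\]
Here $M\approx N^{1/\deg P}$, possibly also restricted to a small range so that $P(z)\approx z$ at the relevant scale. By the PET induction and the degree-lowering argument of \cite{KKL}, $\Lambda$ is controlled by a box-type norm. This yields the following dichotomy: either the count $\Lambda(1_A,1_A,1_A)$ is close to $\Lambda(1_A,1_A,1_A)$ with $P(z)$ replaced by the linear term $z$, averaged over a short scale; or $1_A$ has a density increment of size $\exp(-O(\log(1/\delta))^{O(1)})$ on a structured subset of $[N]^2$ whose side lengths are $N^{c}$. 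KKL iterate this, and the quantitative losses in their iteration are quasi-polynomial in $\delta$ per step. Their $(\log\log\log N)^{-c}$ bound comes only from the linear-corners input they used.

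\emph{Step 3: close with linear corners.} In the linear case, we need the number of linear corners $(x,y),(x+d,y),(x,y+d)$ with $d$ in a short range inside a dense set of density $\delta'$ to be at least $\exp(-O(\log(1/\delta'))^{O(1)})$ times the trivial count. Theorem \ref{lem:lincor} gives that any set of density $\delta'$ with $\delta'\ge \exp(-(\log N')^{c})$ contains a linear corner. Applying it on boxes of side $N'\approx N^{c}$ and averaging (a Varnavides-type argument) yields a count with quasi-polynomial dependence on $\delta'$. Then $\Lambda(1_A,1_A,1_A)\ge \exp(-O(\log(1/\delta))^{O(1)})$. This count exceeds the contribution $O(1/M)$ of the trivial configurations with $P(z)=0$ unless $\delta\le \exp(-(\log\log N)^{c_P})$, which gives the stated bound.

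\emph{Main obstacle.} The delicate step is verifying that every loss in the KKL iteration is genuinely quasi-polynomial in $\delta$. The only exception should be the linear-corners input, so that substituting Theorem \ref{lem:lincor} upgrades the bound from triple-log to $\exp((\log\log N)^{c_P})$. Concretely, I would check three things. First, the number of density-increment iterations is $\exp(O(\log(1/\delta))^{O(1)})$. Second, each iteration shrinks the scale by only a power $N\mapsto N^{c}$. Third, the total scale loss $N^{c^{\#\text{iter}}}$ therefore remains above $\exp((\log N)^{c'})$ exactly when $\log(1/\delta)\le(\log\log N)^{c_P}$. If the iteration count were instead exponential in $1/\delta$, the argument would fail, and one would need to run the degree-lowering as a single energy increment rather than iterating.
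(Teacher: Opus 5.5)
Your top-level plan is what the paper asserts: keep \cite{KKL} and swap Shkredov's linear-corners input for Theorem~\ref{lem:lincor}. But the mechanism you attribute to \cite{KKL}, and therefore your bookkeeping, does not produce the stated bound. As the paper describes \cite{KKL} (and mirrors in its own finite-field proof), that argument is not a density-increment iteration. It is a one-shot transference. One makes a $W$-trick $z=z_0+Wm$ with $W$ \emph{growing}, so that $P(z_0+Wm)/(aW)\equiv m$ modulo every divisor of $W/a$; your Step~1 with a fixed $q$ does not achieve this. The polynomial count is then compared with a linear-corner count weighted by the normalized representation function $w$ of $P$, and Cauchy--Schwarz as in Lemma~\ref{prop:weight} bounds the difference by $\|w-1\|_{U^3}$. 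Apart from the linear-corners input, the only loss is this comparison error $E(N)$, and it is only of size roughly $(\log N)^{-c}$. The reason is that $W\le N^{c}$ forces $w\ll\log N$, while the values of $P$ are not equidistributed modulo primes $\ell>w$ (there are Fourier coefficients of size about $\ell^{-1/2}$). That single logarithm is what turns Shkredov's $(\log\log N)^{-c}$ into the $(\log\log\log N)^{-c}$ of \cite{KKL}. The correct close is to require $\exp(-C(\log(1/\delta))^{600})\gg E(N)$, where the left side is the Varnavides count from Theorem~\ref{lem:lincor}. This gives $\log(1/\delta)\ll(\log\log N)^{c_P}$, which is the theorem.

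Your own accounting omits $E(N)$ and is internally inconsistent. In Step~3 you compare $\exp(-O(\log(1/\delta))^{O(1)})$ only with the $O(1/M)=O(N^{-1/\deg P})$ contribution of $P(z)=0$. That comparison would give $\delta\ge\exp(-(\log N)^{c})$, far stronger than the theorem, so your stated conclusion does not follow from it. Moreover, without controlling $E(N)$ you have no lower bound on $\Lambda(1_A,1_A,1_A)$ in the first place. Your ``main obstacle'' arithmetic also fails. With $\exp(O(\log(1/\delta))^{O(1)})$ iterations, each costing $N\mapsto N^{c}$, the scale survives only while $\#\text{iter}\lesssim\log\log N$, i.e.\ $\log(1/\delta)\lesssim(\log\log\log N)^{c}$. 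That scheme would therefore give only $|A|\ll N^2/\exp((\log\log\log N)^{c})$; reaching the stated bound by iteration would need just $(\log(1/\delta))^{O(1)}$ steps. It is also unclear that such an iteration exists at all. A box-norm dichotomy for a two-dimensional pattern yields increments on product sets $X\times Y$ with $X,Y$ arbitrary, not on sub-grids of side $N^{c}$. The transference route avoids this issue entirely.
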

In the case where $P_1\neq P_2$ not much is known. The only known case is due to Peluse, Prendiville, and Shao \cite{PPS}. They proved that if $A\subseteq [N]^2$ does not contain a configuration of the form $(x,y),(x+z,y),(x,y+z^2)$ with $z\neq 0$, then $|A|\ll \frac{N^2}{(\log N)^c}$ for some absolute constant $c>0$.

In this article, we consider the quantitative problem over $\F_p$ where $p$ is prime. It turns out that we can get a better result compared to the integer case. In fact, the known density of a set that does not contain linear corner is the same as the one that does not contain a polynomial corner. We state here our main result.
\begin{theorem}
\label{thm:corner}
    Let $p$ be a sufficiently large prime such that $p>d^2$. Let $P\in \F_p[z]$ be a polynomial of degree $d\geq 1$.

    If $A\subseteq (\F_p)^2$ does not contain a pattern of the form $(x,y),(x+P(z),y),(x,y+P(z))$, then we have
    \[
    |A|\ll_P\frac{p^2}{\exp(c_P\log p)^{1/600}}
    \]
    for some $c_P>0$.
\end{theorem}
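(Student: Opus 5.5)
We reduce the polynomial corner problem over $\F_p$ to the linear corner problem in $(\F_p)^2$, using the bound of \cite[Theorem 1.1]{JLLOS} (Theorem \ref{lem:lincor}). That bound has the shape $|A|\ll p^2\exp(-c(\log p)^{1/600})$ for corner-free sets in an abelian group of order $p$. The idea is a transference argument. Consider the weighted count
\[
\Lambda_P(A)=\frac{1}{p^3}\sum_{x,y,z\in\F_p}1_A(x,y)1_A(x+P(z),y)1_A(x,y+P(z)),
\]
and compare it with the linear count
\[
\Lambda(A,\nu)=\frac1{p^2}\sum_{x,y,t}1_A(x,y)1_A(x+t,y)1_A(x,y+t)\,\nu(t),
\]
where $\nu(t)=\#\{z: P(z)=t\}$ has mean $1$. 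Then $\Lambda_P(A)=p^{-1}\Lambda(A,\nu)$, so $\Lambda_P(A)$ is the linear count weighted by $\nu$.

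\textbf{Step 1.} Write $\nu=1+(\nu-1)$ and bound the error term in Fourier analysis. The Weil bound gives, for $\xi\ne0$,
\[
\Big|\sum_z \e_p(\xi P(z))\Big|\le (d-1)\sqrt p ,
\]
which holds because $p>d^2$ guarantees $p\nmid d$. Hence $\|\widehat{\nu-1}\|_\infty\le (d-1)p^{-1/2}$ in normalized form. This is a pseudorandomness estimate for $\nu$.

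\textbf{Step 2.} Control $\Lambda(A,\nu-1)$ by this Fourier norm. Expand $\nu-1$ in characters. For fixed $\xi$, the sum $\sum_{x,y,t}1_A(x,y)1_A(x+t,y)1_A(x,y+t)\e_p(\xi t)$ can be handled by the change of variables $u=x+y+t$, which makes the three factors depend on $(x,y)$, $(u-y,y)$ and $(x,u-x)$. This is the standard corner parametrization in which each function misses one variable. Two applications of Cauchy--Schwarz (or a direct $L^2$ argument over the variable $t$) should bound it by $p^{3}\cdot \|\widehat{\nu-1}\|_\infty^{c}$ times a power of the density, giving $|\Lambda(A,\nu)-\Lambda(A,1)|\ll_d p^{-c'}$.

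A cleaner route for this step is to fix $x,y$ and observe the following. For each $(x,y)$, the sum $\sum_t 1_A(x+t,y)1_A(x,y+t)(\nu(t)-1)$ is a bilinear form in the row function $a\mapsto 1_A(a,y)$ and the column function $b\mapsto 1_A(x,b)$ evaluated along the shifted diagonal. Summing over $x$ and using Parseval in the row variable gives a bound of
\[
p^{2}\cdot p\cdot \max_{\xi\neq 0}|\widehat{\nu}(\xi)| \ll_d p^{5/2}.
\]
This is the step I expect to be the main obstacle: getting a genuine power saving in $p$ for a three-term two-dimensional configuration. It likely needs Cauchy--Schwarz to eliminate $1_A(x,y)$ first, followed by the Weil bound applied to the resulting correlation of $\nu$ with itself. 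That correlation still has Fourier coefficients of size $O(d^2/\sqrt p)$, and the Weil bound should cover it.

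\textbf{Step 3.} Conclude. If $A$ contains no polynomial corner, then the only contributions to $\Lambda_P(A)$ come from the at most $d$ roots $z$ of $P(z)=0$. This gives $\Lambda(A,\nu)\le d|A|/p^2$, so
\[
\Lambda(A,1)\le d|A|p^{-2}+O_d(p^{-c'}).
\]
Suppose $|A|\ge p^2\exp(-c_P(\log p)^{1/600})$. The supersaturated form of Theorem \ref{lem:lincor} states that such a set has at least $\exp(-O((\log p)^{O(1)}))\,p^3$ corners, a quasi-polynomial lower bound of the corner count in terms of density. This contradicts the upper bound for $p$ large. The required supersaturation follows from the counting version of \cite{JLLOS}, or from the standard Varnavides averaging over dilates $t\mapsto\lambda t$ with $\lambda\in\F_p^*$. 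Adjusting $c_P$ then yields the stated bound.
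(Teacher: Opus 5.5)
Your Step 2 has a genuine gap, and it is not a technical one. A corner count weighted on the common difference is \emph{not} controlled by the Fourier coefficients of the weight. With $\phi_0(a,b)=2ab-a^2$, $\phi_1(a,b)=a^2$, $\phi_2(a,b)=-2ab$ one has the identity
\[
\phi_0(x,y)+\phi_1(x+t,y)+\phi_2(x,y+t)=t^2 .
\]
Take $g(t)=e_p(-t^2)$, which satisfies $|\widehat g(\xi)|=p^{-1/2}$ for every $\xi$, and $f_i=e_p(\phi_i)$. Then the average $\mathbb{E}_{x,y,t}\,g(t)f_0(x,y)f_1(x+t,y)f_2(x,y+t)$ equals $1$; taking real parts gives a real-valued example of size about $1/8$.

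Both of your routes use nothing about the factors $1_A$ beyond boundedness. This applies to the two Cauchy--Schwarz steps yielding $\|\widehat{\nu-1}\|_\infty^{c}$ and to the Parseval argument yielding $p^{5/2}$, so both would contradict this example. In your own parametrization $u=x+y+t$, the weight $(\nu-1)(u-x-y)$ depends on all three variables. Removing the three $1_A$ factors therefore costs three Cauchy--Schwarz steps and leaves the box norm of $(x,y,u)\mapsto(\nu-1)(u-x-y)$, which is $\|\nu-1\|_{U^3}$. This is the paper's Lemma \ref{prop:weight} with $k=2$. A Fourier ($U^2$) bound on the weight is one derivative short.

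Your fallback has the right shape: one Cauchy--Schwarz to remove $1_A(x,y)$, then Fourier bounds for $\Delta_h(\nu-1)$. For fixed $h$ the remaining two-function count is indeed at most $\max_\xi|\widehat{\Delta_h(\nu-1)}(\xi)|$. But the bound this needs does not follow from Proposition \ref{lem:Weil}. The coefficient $\widehat{\Delta_h\nu}(\xi)=\sum_{s}\widehat\nu(\xi-s)\widehat\nu(s)e_p(sh)$ has about $p$ terms of size $O(d^2/p)$, so the one-variable Weil bound gives only $O(d^2)$. A saving needs cancellation in $s$. That means Weil bounds for $\sum_{P(z')-P(z)=h}e_p(-\xi P(z))$ along the curves $P(z')-P(z)=h$, together with absolute irreducibility of these curves for almost all $h$. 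This two-dimensional algebro-geometric input is the missing ingredient. The paper supplies it by estimating $\|\nu-1\|_{U^3}$ directly, via Lang--Weil on the varieties $P(y_\omega)=x+\omega\cdot(h_1,h_2,h_3)$; this is the case $j=1$, $N=\nu$, $\mu_1=1$ in the proof of Proposition \ref{prop:gowers}.

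Step 3 is fine in outline, with two bookkeeping issues. First, your normalizations of $\Lambda(A,\nu)$ and $\Lambda(A,1)$ are off by a factor of $p$. Second, a merely quasi-polynomial supersaturation bound does not suffice. You need the Varnavides form $\exp\bigl(-O((\log(1/\delta))^{600})\bigr)$, which is at least $p^{-c'/2}$ once the constant in the density threshold is small enough.
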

Unlike the case of integers, the image of $P$ in $\F_p$ is quite dense, so this gives no local obstruction when we consider our problem. We also remark that in the case of one dimensional polynomial progressions, i.e., polynomial progressions in $[N]$, some results can be found at \cite{AS2,KKL}. For polynomial progressions in finite fields, Peluse \cite{Pel} found the best bound under the condition that the values of the polynomials at $0$ are $0$. In that paper, she introduced a technique that is widely used now, called the \textit{degree-lowering method}.

We briefly outline the key ideas behind our main result. We follow closely the method from \cite{KKL}, but we have some simplifications here. We consider the difference between counting operators of polynomial corners in $A$ and linear corners, and this introduces a weight to the polynomial corners counting operator. This is called the \textit{transference principle}. In our setting, the associated weight is bounded, unlike the case of the integer case where the associated weights are unbounded. After several applications of Cauchy-Schwarz inequality, we end up with bounding the Gowers $U^3(\F_p)$ norm of our weight. It turns out that by the nature of our weight and the fact that we are working in finite fields, we do not need to invoke the Inverse Theorem of Gowers Norm. Instead, we use some algebraic geometry tools to establish a bound for the $U^3(\F_p)$ norm of our weight, and this simplifies calculations.

\section{Notations and Conventions}
We use the notation $f\ll g$ or $g\gg f$ or $f=O(g)$ as being equivalent to the statement $|f|\leq Cg$ for some constant $C>0$. If the constant depends on a parameter, for example if it depends on $d,$ we use the subscript $f=O_{d}(g)$ or $f\ll_d g$ or $g\gg_d f$ to emphasize that $C=C(d)$ depends only on $d.$ We write $f=o(g)$ if $(f/g)(x)$ tends to zero as $x$ goes to infinity. We again write the subscript if the decay depends on some parameters.

For any function $f : \F_p\rightarrow \C$, the \textit{$k$-th Gowers norm} is defined by
\[
\|f\|_{U^k(\F_p)}:=\left(\underset{x,h_1,\dots,h_k\in\F_p}{\mathbb{E}} \prod_{\omega\in\{0,1\}^k} \mathcal{C}^{|\omega|}f(x+\omega\cdot (h_1,\dots,h_k))\right)^{1/2^k}.
\]
Here, we have that if $\omega=(\omega_1,\dots,\omega_k)$, the notation $|\omega|$ means $\omega_1+\dots+\omega_k$ and $\omega\cdot (h_1,\dots,h_k)=\omega_1h_1+\dots+\omega_k h_k$, also $\mathcal{C}$ is the conjugation operator.
We just write $U^k$ instead of $U^k(\F_p)$ for the $k$-th Gowers norm since the underlying space will remain fixed.

We write $\Delta_hf(x)$ for $f(x)\overline{f(x+h)}$ where $\overline{f}$ is the complex conjugate of $f$. Here $\mathbb{E}$ denotes the average or expectation operator, that is,
    \[
    \underset{x\in\F_p}{\mathbb{E}}f(x)=\frac{1}{p}\sum_{x\in\F_p}f(x)
    \]
    and we define inductively
    \[
    \underset{h_1,\dots,h_k\in\F_p}{\mathbb{E}}=\underset{h_1\in\F_p}{\mathbb{E}}\left(\underset{h_2,\dots,h_k\in\F_p}{\mathbb{E}}\right).
    \]
    So using these notations, we get that
\[
\|f\|_{U^k}:=\left(\underset{x,h_1,\dots,h_k\in\F_p}{\mathbb{E}} \Delta_{h_1}\dots\Delta_{h_k}f(x)\right)^{1/2^k}.
\]
We define the Fourier transform of $f$ at $r\in \F_p$ by
\[
\widehat{f}(r):=\frac{1}{p}\sum_{x\in \F_p}f(x)e_p(-rx)
\]
 where $e_p(x):=e^{2\pi ix/p}.$ For $1\leq q<\infty$, we define the $\ell^q-$ norm for $\widehat{f}$ by
 \[
 \|\widehat{f}\|_q:=\left(\sum_{r\in\F_p}|\widehat{f}(r)|^q\right)^{1/q}.
 \]
 We denote $1_{P(\F_p)}$ to be the characteristic function of the image of $\F_p$ under the polynomial $P$. For our main analysis, we denote the function $C_P1_{P(\F_p)}(z)$ by $w(z)$ where $C_P$ is the unique constant such that
\[
\underset{z\in\F_p}{\mathbb{E}}w(z)=1.
\]
When $A$ is a finite set, we have $\displaystyle\F_p^A:=\prod_{a\in A}\F_p$. We also denote the algebraic closure of $\F_p$ by $\overline{\F}_p$. For a variety $X$, the collection of points of $X$ in $\F_p$ is denoted by $X(\F_p).$

\section{Collection of Estimates}
\label{sec:prel}
We record several known results that will be useful to prove Theorem \ref{thm:corner}. We mention two results that come from algebraic geometry, the first one is due to Weil as a consequence of his proof of the Riemann Hypothesis for curves over finite fields. An example of modern treatment of this can be found at \cite{IK}.
\begin{prop}
\label{lem:Weil}
    Let $f$ be a non-constant polynomial of degree $d$ over $\F_p$. Then we have, for nonzero $\theta\in\F_p$
    \[
    \left|\sum_{x\in \F_p} e(\theta f(x))\right|\leq (d-1)\sqrt{p}.
    \]
    In particular, the bound is better compared to the trivial bound if $p\gg d^2$.
\end{prop}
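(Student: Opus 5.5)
The statement as written needs $p\nmid d$. For example, $f(x)=x^p-x$ gives a sum equal to $p$. So I will prove it under the hypothesis $p>d$, which is the standing assumption $p>d^2$ of Theorem~\ref{thm:corner}. I will also treat $e(\cdot)$ as $e_p(\cdot)$. The plan is the classical $L$-function route: show that the exponential sum is, up to sign, a sum of $d-1$ algebraic numbers, and then invoke the Riemann Hypothesis for curves to show each has absolute value $\sqrt p$. Since adding a constant to $f$ only multiplies the sum by a root of unity, I may assume $f(0)=0$. Write $\psi(t)=e_p(\theta t)$, a nontrivial additive character of $\F_p$, and $\psi_n=\psi\circ\mathrm{Tr}_{\F_{p^n}/\F_p}$. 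Set $S_n=\sum_{x\in\F_{p^n}}\psi_n(f(x))$, so the target quantity is $S_1$.

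\textbf{Step 1 (rationality and degree).} Define $L(T)=\exp\bigl(\sum_{n\ge1}S_nT^n/n\bigr)$. Grouping the points of $\overline{\F}_p$ into Frobenius orbits rewrites this as an Euler product over monic irreducible $\pi\in\F_p[x]$ of $(1-\lambda(\pi)T^{\deg\pi})^{-1}$. Here $\lambda(g):=\psi\bigl(\sum_{g(\alpha)=0}f(\alpha)\bigr)$, with roots counted with multiplicity, which is a completely multiplicative function on monic $g$. Hence $L(T)=\sum_{k\ge0}T^k\sum_{\deg g=k,\ g\text{ monic}}\lambda(g)$. The quantity $\sum_{g(\alpha)=0}f(\alpha)$ is a linear combination of power sums of the roots. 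By Newton's identities it is a polynomial in the coefficients $a_1,\dots,a_k$ of $g$. For $k\ge d$ the term coming from $x^d$ contains $\pm d\,a_d$ linearly, and $a_d$ does not appear in any other term. Since $d\not\equiv0\pmod p$, summing over $a_d\in\F_p$ kills the coefficient by orthogonality of $\psi$. Therefore $L(T)$ is a polynomial of degree at most $d-1$. A short check of the leading coefficient shows the degree is exactly $d-1$. Writing $L(T)=\prod_{i=1}^{d-1}(1-\omega_iT)$ and comparing logarithmic derivatives gives $S_n=-\sum_i\omega_i^n$, and in particular $S_1=-(\omega_1+\dots+\omega_{d-1})$.

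\textbf{Step 2 (Riemann Hypothesis).} It remains to show $|\omega_i|=\sqrt p$, or even just $|\omega_i|\le\sqrt p$, which suffices for the stated inequality. Consider the Artin--Schreier curve $C:\ y^p-y=\theta f(x)$. It is smooth, absolutely irreducible because $p\nmid d$, and has genus $g=(p-1)(d-1)/2$. Counting its points over $\F_{p^n}$ by orthogonality gives $\#C(\F_{p^n})=p^n+\sum_{\psi'\ne1}S_n(\psi')$, where the sum runs over the $p-1$ nontrivial characters $\psi'(t)=e_p(c\theta t)$, $c\in\F_p^*$. Hence the numerator of the zeta function of $C$ equals $\prod_{\psi'\ne1}L_{\psi'}(T)$, a polynomial of degree $(p-1)(d-1)=2g$ by Step 1. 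Weil's theorem says every inverse root of this numerator has absolute value $\sqrt p$, so every $\omega_i$ does. Then $|S_1|\le(d-1)\sqrt p$ follows.

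\textbf{Main obstacle.} The genuinely hard input is Weil's Riemann Hypothesis for $C$; Step 1 is elementary bookkeeping. If a self-contained argument is wanted, I would use Stepanov's method, as presented in \cite{IK}. One constructs an auxiliary polynomial vanishing to high order at the $\F_{p^n}$-points of $C$ and deduces $|\#C(\F_{p^n})-p^n-1|\ll_{d,p}p^{n/2}$ for even $n$. This gives $|S_n|\ll p^{n/2}$, and hence $|\omega_i|\le\sqrt p$, because a power sum $\sum_i\omega_i^n$ cannot stay $O(R^n)$ for all large $n$ if some $|\omega_i|>R$. For this paper I would simply cite Weil and \cite{IK} for this step.
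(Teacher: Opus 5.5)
Your argument is correct. The paper does not prove this proposition: it quotes it as a consequence of Weil's Riemann Hypothesis for curves, with \cite{IK} as the reference. So your proposal is a reconstruction of the standard derivation behind that citation. You show that $L(T)$ is a polynomial of degree at most $d-1$ via Newton's identities and orthogonality in $a_d$, and then identify the $L_{\psi'}$ as the factors of the zeta numerator of the Artin--Schreier curve $y^p-y=\theta f(x)$. What this buys over the citation is an explicit reduction of the exponential-sum bound to RH for one specific curve. Only the inequality $|\omega_i|\le\sqrt p$ and $\deg L\le d-1$ are needed. The exactness $\deg L=d-1$ is not needed, and it follows anyway from the numerator having degree exactly $2g$.

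A few small inaccuracies should be fixed.
\begin{itemize}
\item Your example $f(x)=x^p-x$ does not refute the statement as written. The sum is $p$, but the claimed bound is $(p-1)\sqrt p\ge p$ once $p\ge 3$. In fact for $d\ge p\ge 3$ the inequality is trivial, so restricting to $p>d$ loses nothing. The only genuine failure is $p=d=2$, e.g.\ $f=x^2+x$ with sum $2>\sqrt2$.
\item In Step~2, say explicitly that for $p\nmid d$ the smooth projective model of $C$ has exactly one point at infinity, rational over $\F_p$. This gives $\#\overline{C}(\F_{p^n})=p^n+1+\sum_{\psi'\ne1}S_n(\psi')$, which is what makes the numerator equal $\prod_{\psi'\ne1}L_{\psi'}(T)$.
\item In the Stepanov remark, a bound on $\#C(\F_{p^n})$ controls only the aggregate $\sum_{\psi'\ne1}S_n(\psi')$, not an individual $S_n$. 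Run your power-sum argument on the inverse roots of the zeta numerator instead. That bounds all of them by $\sqrt p$, and hence every $\omega_i$ at once.
\end{itemize}
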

Besides the previous lemma, we also need the following estimate that again comes from the study of the Riemann Hypothesis over finite fields. It is known as Lang-Weil estimate \cite{LW}.
\begin{prop}
\label{lem:LangWeil}
    Let $X$ be an affine variety over $\F_p$ of dimension $n$, i.e., $n$ is the maximal length of chains of distinct nonempty irreducible subvarieties $X_0\subseteq X_1\subseteq \dots\subseteq X_n$ of $X$. Let $C$ be the number of irreducible components of $X$ of dimension $n$ such that these components are also irreducible when we consider $X$ as variety over $\overline{\F}_p$. Such a component is called absolutely or geometrically irreducible. Then we have
    \[
    |X(\F_p)|=C\cdot p^n+O(p^{n-1/2})
    \]
    where $|X(\F_p)|$ is the number of points of $X$ at $\F_p$. The constant at the big-$O$ notation depends only on the geometry of $X_{\overline{\F_p}}$, the variety when we consider it as variety over $\overline{\F}_p$.
\end{prop}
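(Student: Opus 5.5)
This is the classical Lang--Weil estimate, which we quote from \cite{LW}. If one wanted to reprove it, I would follow the original strategy: reduce to a single geometrically irreducible variety, then fibre it by curves and apply the Riemann Hypothesis for curves (Weil), the same input behind Proposition \ref{lem:Weil}.

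\emph{Step 1: reductions.} Decompose $X$ into its $\F_p$-irreducible components. Components of dimension $<n$ contribute $O(p^{n-1})$ points, by the trivial bound $|Y(\F_p)|\ll_{\deg Y} p^{\dim Y}$. This bound follows by induction on dimension: take a finite projection to affine space (Noether normalization) and bound the size of its fibres by the degree.

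Now let $Y$ be an $n$-dimensional component that is irreducible over $\F_p$ but not geometrically irreducible. Then Frobenius permutes its geometric components nontrivially. Every $\F_p$-point of $Y$ is fixed by Frobenius, so it lies in the intersection of a geometric component with its Frobenius conjugate. This intersection has dimension $\le n-1$, so such $Y$ contribute $O(p^{n-1})$ points. Pairwise intersections of components are likewise of dimension $\le n-1$, so inclusion--exclusion costs only $O(p^{n-1})$. It therefore suffices to show $|X(\F_p)|=p^n+O(p^{n-1/2})$ for a geometrically irreducible $X$ of dimension $n$. In every such step the constants depend only on the number and degrees of the defining equations, which is the geometry of $X_{\overline{\F}_p}$.

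\emph{Step 2: the case $n=1$.} Pass to the smooth projective model of the curve. Weil's theorem gives $p+1+O(g\sqrt p)$ points on that model, and the genus $g$ is bounded in terms of the degree. The singular points and the points at infinity number $O(1)$, again bounded by the degree. This gives $|X(\F_p)|=p+O(\sqrt p)$.

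\emph{Step 3: induction on $n$, the main obstacle.} After a generic linear change of coordinates (possibly over an extension, handled by taking $p$ large relative to the degree), project $X$ to $\mathbb{A}^{n-1}$ with fibres that are curves. The key input is a Bertini-type theorem with effective bounds: for all $u\in\F_p^{n-1}$ outside a proper subvariety $Z$ of bounded degree, the fibre $X_u$ is a geometrically irreducible curve of bounded degree. Ensuring this effectively and uniformly is the hardest part. I would try to do it by applying the Noether/Ostrowski irreducibility forms, which are polynomial conditions in the coefficients, to the fibres.

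Given this, Step 2 gives $|X_u(\F_p)|=p+O(\sqrt p)$ for each of the $p^{n-1}-O(p^{n-2})$ points $u\notin Z$. The exceptional fibres have $O(p)$ points each, over $O(p^{n-2})$ bases, for a total of $O(p^{n-1})$. Summing over $u$ yields $p^n+O(p^{n-1/2})$, completing the induction.
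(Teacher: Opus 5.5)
Your proposal matches the paper, which does not prove this proposition but simply quotes it from \cite{LW}, as you do. Your optional sketch is essentially the classical Lang--Weil argument: discard lower-dimensional components and, via Frobenius, those that are not geometrically irreducible; then fibre a geometrically irreducible $X$ into curves using an effective Bertini theorem and apply Weil's bound on each fibre. You correctly identify the effective Bertini step as where the real work lies.
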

We also need the bound that comes from \cite{JLLOS} for the cardinality of a set that does not contain nontrivial linear corners.
\begin{theorem}
\label{lem:lincor}
    There exists a constant $c>0$ such that the following holds. Let $(G,+)$ be an abelian group. Let $A\subseteq G\times G$ with no $x,y,d\in G,d\neq 0$ such that $(x,y),(x+d,y),(x,y+d)\in A$. Then we have
    \[
    |A|\leq\frac{|G|^2}{\exp(c(\log |G|)^{1/600})}
    \]
\end{theorem}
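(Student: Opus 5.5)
This statement is imported verbatim from \cite{JLLOS}, and in the paper it is used only as a black box. The honest plan is therefore to cite it. For completeness, here is the route I would follow to reprove it: a density-increment argument in the style of Kelley--Meka, run on \emph{product sets} $X\times Y\subseteq G\times G$ instead of on subgroups.

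\textbf{Reformulation.} First I would change coordinates. Write a corner as $(x,y),(x+d,y),(x,y+d)$ and set $s=x+y+d$. The three points are then determined by the pairs
\[
(x,y),\quad (s-y,\,y),\quad (x,\,s-x).
\]
So counting corners in $A$ amounts to counting ``triangles'' in a tripartite structure with vertex classes indexed by $x$, $y$ and $s$. There are three edge relations, each given by $1_A$ composed with a linear map.

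\textbf{Counting lemma.} Let $\alpha$ be the density of $A$ inside a product set $X\times Y$. I would prove that $A$ has close to $\alpha^3|X||Y||G|$ corners, relative to the appropriate normalisation, unless the balanced function $1_A-\alpha 1_{X\times Y}$ has a large \emph{grid norm} (box norm) of high order $k\approx\log(1/\alpha)$. The proof would use Cauchy--Schwarz twice to eliminate two of the three edge relations, in the spirit of Kelley--Meka's Hölder lifting. Note that trivial corners ($d=0$) number only $|A|$, which is negligible when $|A|\gg |G|^{3/2}$ in the relevant range.

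\textbf{Density increment.} If the grid norm is large, I would apply a sifting or dependent-random-choice step. This produces a new product set $X'\times Y'\subseteq X\times Y$ on which the density of $A$ is at least $\alpha(1+c)$. The sizes of $X'$ and $Y'$ must drop by at most a factor $\exp(-\mathrm{poly}\log(1/\alpha))$. One also needs $X'$ and $Y'$ to be \emph{spread} (pseudorandom in the sense of having no large density increment on sub-products), so that the counting lemma can be applied again inside them. Securing spreadness is the main obstacle. In the group setting, Bohr sets provide the needed structure, but the 2-dimensional corners problem has no such structure. I would try to handle this by a spread-approximation or sunflower-type decomposition of $X$ and $Y$ into spread pieces, at quasipolynomial cost.

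\textbf{Iteration.} Iterating the increment $O(\log(1/\alpha))$ times, each step costs a factor $\exp(-\mathrm{poly}\log(1/\alpha))$ in the size of the product set. The process must halt while the product set still has size at least $|G|^{2-o(1)}$. That forces $\log|G|\ll(\log(1/\alpha))^{C}$ for some constant $C$, and choosing $C=600$ gives
\[
|A|\le |G|^2\exp\bigl(-c(\log|G|)^{1/600}\bigr).
\]
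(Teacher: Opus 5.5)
Your plan matches the paper: Theorem~\ref{lem:lincor} is not proved there but imported directly from \cite{JLLOS} and used as a black box, exactly as you propose. Your Kelley--Meka-style outline is a fair heuristic summary of the cited argument: corners as triangles via $s=x+y+d$, grid-norm counting, and density increment on spread product sets. It is not itself a proof, though (in particular the spreadness step is left as something you would ``try''), so the citation is what actually carries the statement.
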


We show here the main claim that will lead to our main theorem. Let $f_0,f_1,f_2 : (\F_p)^2\rightarrow \C$ be functions such that $|f_i|\leq 1$ for any $i=0,1,2$. We call these functions to be \textit{$1$-bounded}. We want to consider the difference between
\[
\Lambda(f_0,f_1,f_2)=\underset{x,y,z\in\F_p}{\mathbb{E}}f_0(x,y)f_1(x+P(z),y)f_2(x,y+P(z))
\]
and
\[
\Lambda^{Model}(f_0,f_1,f_2)=\underset{x,y,z\in\F_p}{\mathbb{E}} f_0(x,y)f_1(x+z,y)f_2(x,y+z).
\]
Our goal is to show that this difference is small enough to allow us to use the result from the linear corners to deduce the density of the set that does not contain polynomial corners in $(\F_p)^2$. Turns out that we can bound the difference by $U^3$ norm of an appropriate function. We prove a more general statement.
\begin{lemma}
\label{prop:weight}
    Let $g : \F_p\rightarrow \C$ be an arbitrary function, $f_1,\dots,f_k : \F_p^D\rightarrow \C$ be $1$-bounded functions for some positive integers $k$ and $D$. We also let $\textbf{v}_1,\dots,\textbf{v}_k\in\F_p^D$ be any vectors in $\F_p^D$. Then we have
    \[
    \left|\underset{\substack{\textbf{x},\textbf{v}_1,\dots,\textbf{v}_k\in \F_p^D\\z\in\F_p}}{\mathbb{E}} g(z)f_0(\textbf{x})f_1(\textbf{x}+z\textbf{v}_1)\dots f_k(\textbf{x}+z\textbf{v}_k)\right|\leq \|g\|_{U^{k+1}}.
    \]
\end{lemma}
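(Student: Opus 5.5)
The plan is to prove the bound by induction on $k$, removing one function at a time by Cauchy--Schwarz. Each step turns $g$ into a multiplicative derivative $\Delta_h g$, until only an average of $g$ differentiated $k+1$ times remains, which is the $U^{k+1}$ norm. The averaging over $\textbf{v}_1,\dots,\textbf{v}_k$ in the statement is essential: it lets us change variables so that each $f_i$ depends on a free variable.

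\emph{Base case $k=0$.} Here the average is $\mathbb{E}_z g(z)\cdot\mathbb{E}_{\textbf{x}} f_0(\textbf{x})$, whose modulus is at most $|\mathbb{E}_z g(z)| = \|g\|_{U^1}$.

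\emph{Inductive step.} For general $k$, I would first treat $z=0$ separately.
\begin{itemize}
\item The $z=0$ term has weight $1/p$ and contributes at most $|g(0)|/p$. This is not small relative to the Gowers norm in general, so more care is needed.
\item Cleaner option: for $z\neq 0$, substitute $\textbf{y}=\textbf{x}+z\textbf{v}_k$. For fixed $z\ne0$, the map $\textbf{v}_k\mapsto \textbf{y}$ is a bijection of $\F_p^D$.
\item The obstruction is that the other terms $f_i(\textbf{x}+z\textbf{v}_i)$ then still involve $\textbf{x}$. Instead I reparametrize everything: for $z\neq0$ set $\textbf{y}_i=\textbf{x}+z\textbf{v}_i$, all uniform and independent of $\textbf{x}$. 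The expression then factors as $\mathbb{E}_z g(z)\prod_i \mathbb{E} f_i$, which is trivially bounded by $|\mathbb{E} g|$.
\end{itemize}
This suggests the lemma is nearly trivial when the $\textbf{v}_i$ are averaged. Indeed, for each $z\ne0$ the inner average equals $\prod_{i=0}^k \mathbb{E}_{\textbf{x}} f_i$. For $z=0$ it equals $\mathbb{E}_{\textbf{x}}\prod_i f_i$. So the whole expression is
\[
\Bigl(\mathbb{E}_z g(z)\Bigr)\prod_{i=0}^k \mathbb{E} f_i + \frac{g(0)}{p}\Bigl(\mathbb{E}_{\textbf{x}}\prod_i f_i-\prod_i\mathbb{E} f_i\Bigr).
\]

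\emph{Main obstacle: the $z=0$ correction.} The first term is at most $\|g\|_{U^1}\le\|g\|_{U^{k+1}}$, using the monotonicity of Gowers norms. The correction term is $O(|g(0)|/p)$, and this is the crux. Since $g$ is arbitrary, a spike at $0$ (for instance $g=p\,1_{0}$) has $\|g\|_{U^{k+1}}$ of order $p^{1-(k+2)/2^{k+1}}$, which still dominates $|g(0)|/p=1$ for $k\ge1$. In general, $\|g\|_{U^{k+1}}^{2^{k+1}}$ includes the diagonal contribution $p^{-(k+1)}|g(0)|^{2^{k+1}}$, so $\|g\|_{U^{k+1}}\ge|g(0)|p^{-(k+1)/2^{k+1}}$ only when the other terms are nonnegative, which fails in general.

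So the robust route is the standard one. Apply Cauchy--Schwarz $k$ times in the variables $\textbf{x},\textbf{v}_i$, eliminating $f_0,\dots,f_k$ in turn, each time duplicating $z$ into $z,z+h_j$. This yields $\mathbb{E}_{z,h}\Delta_{h_1}\cdots\Delta_{h_{k+1}}g(z)$ with no singular terms, because derivatives in $z$ absorb the $z=0$ issue. To set this up, I would first replace the average over $\textbf{v}_i$ by an average over $z\textbf{v}_i$ written as linear forms in $z$. The hardest part is this bookkeeping of Cauchy--Schwarz steps, and I would carry it out in the one-dimensional model first.
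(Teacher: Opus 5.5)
\textbf{There is a genuine gap.} Your last paragraph names the paper's strategy (repeated Cauchy--Schwarz, doubling $z$ each time), but the proposal never says how a Cauchy--Schwarz step actually removes an $f_i$. The one concrete set-up step you suggest, replacing the average over $\textbf{v}_i$ by one over $z\textbf{v}_i$, would defeat it. For $z\neq 0$ that substitution is exactly what decouples every $f_i$ from $z$, which leads back to your factorized formula with the uncontrolled $g(0)/p$ term. For $z=0$ it is not even a bijection.

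The missing idea is to translate $\textbf{x}$ instead, which is a bijection for every $z$, including $z=0$. After the first Cauchy--Schwarz (with $f_0(\textbf{x})$ outside, using $|f_0|\le 1$, and $z$ inside) you get
\[
\mathbb{E}_{\textbf{x},\textbf{v},z,h_1}\,\Delta_{h_1}g(z)\prod_{i=1}^k\Delta_{h_1\textbf{v}_i}f_i(\textbf{x}+z\textbf{v}_i).
\]
The substitution $\textbf{x}\mapsto \textbf{x}-z\textbf{v}_1$ turns the $f_1$-factor into $\Delta_{h_1\textbf{v}_1}f_1(\textbf{x})$, which no longer depends on $z$. The remaining factors become $\Delta_{h_1\textbf{v}_i}f_i(\textbf{x}+z(\textbf{v}_i-\textbf{v}_1))$. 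A second Cauchy--Schwarz over $(\textbf{x},\textbf{v},h_1)$ then removes $f_1$ and doubles $z$ again. Iterating needs $k+1$ applications, one for each of $f_0,\dots,f_k$ (not $k$ as you wrote), and the compounded square roots give $\|g\|_{U^{k+1}}$. The argument never singles out $z=0$ and never uses the averaging over $\textbf{v}_i$. That matters: the corner application uses $D=2$, $k=2$ and \emph{fixed} $\textbf{v}_1=(1,0)$, $\textbf{v}_2=(0,1)$, averaging only over $x,y,z$, so your factorization is unavailable there. Your remark that the averaging is ``essential'' is therefore backwards.

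\textbf{On the factorization route.} Your formula $(\mathbb{E}g)\prod_i\mathbb{E}f_i+\frac{g(0)}{p}\bigl(\mathbb{E}\prod_i f_i-\prod_i\mathbb{E}f_i\bigr)$ is correct, and the correction term is in fact small. Fourier inversion and H\"older give
\[
|g(0)|\le\sum_r|\widehat{g}(r)|\le p^{3/4}\|g\|_{U^2}\le p^{3/4}\|g\|_{U^{k+1}},
\]
so the correction is at most $2p^{-1/4}\|g\|_{U^{k+1}}$. Adding this to $|\mathbb{E}g|\le\|g\|_{U^{k+1}}$ only yields $(1+2p^{-1/4})\|g\|_{U^{k+1}}$, not the stated constant $1$. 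Recovering constant $1$ would need a further argument relating $|\prod_i\mathbb{E}f_i|$ to $|\mathbb{E}\prod_i f_i-\prod_i\mathbb{E}f_i|$, and even then only for $p$ large in terms of $k$. So that route neither proves the lemma as stated nor covers the fixed-vector case it is used for.
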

\begin{proof}
    We apply the Cauchy-Schwarz inequality repeatedly to obtain the result. Use Cauchy-Schwarz to double the variable $z$, we have that
    \begin{align*}
    &\left|\underset{\substack{\textbf{x},\textbf{v}_1,\dots,\textbf{v}_k\in \F_p^D\\z\in\F_p}}{\mathbb{E}} g(z)f_0(\textbf{x})f_1(\textbf{x}+z\textbf{v}_1)\dots f_k(\textbf{x}+z\textbf{v}_k)\right|\\
    &\leq\left|\underset{\substack{\textbf{x},\textbf{v}_1,\dots,\textbf{v}_k\in \F_p^D\\z,h_1\in\F_p}}{\mathbb{E}} \Delta_{h_1}g(z)\Delta_{h_1\textbf{v}_1}f_1(\textbf{x}+z\textbf{v}_1)\dots \Delta_{h_1\textbf{v}_k}f_k(\textbf{x}+z\textbf{v}_k)\right|^{1/2}.
    \end{align*}
    Next we shift $\textbf{x}$ to $\textbf{x}-z\textbf{v}_1$ and apply Cauchy-Schwarz again to find that
    \begin{align*}
        &\left|\underset{\substack{\textbf{x},\textbf{v}_1,\dots,\textbf{v}_k\in \F_p^D\\z,h_1\in\F_p}}{\mathbb{E}} \Delta_{h_1}g(z)\Delta_{h_1\textbf{v}_1}f_1(\textbf{x}+z\textbf{v}_1)\dots \Delta_{h_1\textbf{v}_k}f_k(\textbf{x}+z\textbf{v}_k)\right|\\
        &\leq\left|\underset{\substack{\textbf{x},\textbf{v}_1,\dots,\textbf{v}_k\in \F_p^D\\z,h_1,h_2\in\F_p}}{\mathbb{E}} \Delta_{h_2}\Delta_{h_1}g(z)\Delta_{h_2(\textbf{v}_2-\textbf{v}_1)}\Delta_{h_1\textbf{v}_2}f_2(\textbf{x}+z(\textbf{v}_2-\textbf{v}_1))\dots \Delta_{h_2(\textbf{v}_k-\textbf{v}_1)}\Delta_{h_1\textbf{v}_k}f_k(\textbf{x}+z(\textbf{v}_k-\textbf{v}_1))\right|^{1/2}.
    \end{align*}
    Repeat the process until we are left with only $g$, we end up with
    \begin{align*}
     &\left|\underset{\substack{\textbf{x},\textbf{v}_1,\dots,\textbf{v}_k\in \F_p^D\\z\in\F_p}}{\mathbb{E}} g(z)f_0(\textbf{x})f_1(\textbf{x}+z\textbf{v}_1)\dots f_k(\textbf{x}+z\textbf{v}_k)\right|\\
     &\leq \left|\underset{z,h_1,\dots,h_s,h_{s+1}\in\F_p}{\mathbb{E}} \Delta_{h_{k+1}}\Delta_{h_k}\dots\Delta_{h_1}g(z)\right|^{1/2^{k+1}}=\|g\|_{U^{k+1}}.
    \end{align*}
\end{proof}
\section{Proof of Main Theorem}
In this section we prove our main theorem provided we have the following results.
\begin{prop}
\label{prop:gowers}
    Let $p$ be a sufficiently large prime such that $p>d^2$. Let $P\in \F_p[z]$ be a polynomial of degree $d\geq 1$. Recall that $w(x)=C_P1_{P(\F_p)}(x)$ where $C_P$ is the unique constant such that $\displaystyle \sum_{x\in \F_p}w(x)=p$.
    \begin{enumerate}
        \item We have \[
    \|w-1\|_{U^2}=O_d(p^{-1/4}).
    \]
    \item For $k\geq 3,$, we have
    \[
    \|w-1\|_{U^k}=O_{d,k}(p^{-1/2^{k+1}}).
    \]
    \end{enumerate}
\end{prop}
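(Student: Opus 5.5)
For part (1) I would work on the Fourier side. Since $\|f\|_{U^2}^4=\sum_r|\widehat f(r)|^4\le \max_r|\widehat f(r)|^2\,\|\widehat f\|_2^2$, it suffices to bound the Fourier coefficients of $w-1$. Its zero mode vanishes by the normalisation of $C_P$, and for $r\neq 0$ we have $\widehat{(w-1)}(r)=\widehat w(r)$. Parseval together with the pointwise bound $|w|\le C_P$ gives $\|\widehat w\|_2^2=\mathbb{E}|w|^2=C_P=O_d(1)$, where the last step uses $|P(\F_p)|\ge p/d$ because each value has at most $d$ preimages. The main task is therefore to show $\max_{r\neq0}|\widehat w(r)|=O_d(p^{-1/2})$, which gives $\|w-1\|_{U^2}^4=O_d(p^{-1})$ as required.

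To bound $\widehat{w}(r)$, I would not work with $1_{P(\F_p)}$ directly. Instead I would write
\[
1_{P(\F_p)}(x)=\sum_{j=1}^d \frac{1}{j}N_j(x),
\]
where $N_j(x)$ indicates that $P(z)=x$ has exactly $j$ distinct solutions $z\in\F_p$. Next I would express the quantities $\#\{z:P(z)=x\}$ through the curves (or varieties) $X_j=\{(z_1,\dots,z_j): P(z_1)=\dots=P(z_j),\ z_i \text{ distinct}\}$. The point is that $\sum_x 1_{P(\F_p)}(x)e_p(-rx)$ can be expressed, by inclusion--exclusion on the fibre sizes (a Möbius-type inversion over the number of preimages), as a fixed $O_d(1)$ combination of sums $\sum_{(z_1,\dots,z_j)\in X_j(\F_p)/\sim} e_p(-rP(z_1))$. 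For $j=1$ this is a plain Weil sum, bounded by $(d-1)\sqrt p$ via Proposition~\ref{lem:Weil}. For $j\ge2$ I would decompose $X_j$ into absolutely irreducible curves $C$ and use the Weil bound for character sums on curves. This requires $P(z_1)$ to be nonconstant on each component, which holds since $z_1$ is nonconstant and $P$ is nonconstant. The characteristic assumption $p>d^2$ ensures that $P$ is not a $p$-th power composite and that the bounds are nontrivial. The outcome is $|\widehat w(r)|=O_d(p^{-1/2})$.

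For part (2) I would use induction on $k$ together with the standard inequality relating consecutive Gowers norms of bounded functions. Let $f=w-1$, which is $O_d(1)$-bounded. Then $\|f\|_{U^{k}}^{2^{k}}=\mathbb{E}_h\|\Delta_h f\|_{U^{k-1}}^{2^{k-1}}$. I would apply Cauchy--Schwarz and bound by the crude estimate $\|f\|_{U^k}^{2^k}\ll \|f\|_{U^{k-1}}^{2^{k-1}}\cdot O(1)$. To do this, expand $\Delta_{h_1}\cdots\Delta_{h_{k-2}}f$ as a bounded function and use $\|F\|_{U^2}^4\le\|\widehat F\|_\infty^2\|\widehat F\|_2^2$ in the last two variables. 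The cleaner route is the monotonicity-type inequality $\|f\|_{U^k}^{2^k}\le \|f\|_\infty^{2^k-2^{k-1}}\cdot\|f\|_{U^{k-1}}^{2^{k-1}}$, which follows from Cauchy--Schwarz in the $h_k$ variable. This gives $\|f\|_{U^k}\ll \|f\|_{U^{k-1}}^{1/2}$, and iterating from $\|f\|_{U^2}\ll p^{-1/4}$ yields $\|f\|_{U^3}\ll p^{-1/8}$, and in general $\|f\|_{U^k}\ll p^{-1/2^{k}}$, which is better than the claimed $p^{-1/2^{k+1}}$.

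The main obstacle is the Fourier bound on $1_{P(\F_p)}$ in part (1): the varieties $X_j$ must be handled carefully. Showing that each component carries a nondegenerate character sum, with constants depending only on $d$, is where Lang--Weil (Proposition~\ref{lem:LangWeil}) and Weil-type bounds on curves are genuinely needed.
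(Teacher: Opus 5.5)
\textbf{Gap in part (2).} Part (2) rests on a false inequality. The bound $\|f\|_{U^k}^{2^k}\le\|f\|_\infty^{2^k-2^{k-1}}\|f\|_{U^{k-1}}^{2^{k-1}}$ does not follow from Cauchy--Schwarz in $h_k$. Write $\|f\|_{U^k}^{2^k}=\mathbb{E}_{h_1,\dots,h_{k-1}}\bigl|\mathbb{E}_x\Delta_{h_1}\cdots\Delta_{h_{k-1}}f(x)\bigr|^2$. Cauchy--Schwarz then gives only the reverse monotonicity $\|f\|_{U^{k-1}}\le\|f\|_{U^k}$, and lower Gowers norms do not control higher ones. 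Concretely, $f(x)=e_p(x^2)$ has $|\widehat f(r)|=p^{-1/2}$ for every $r$, so $\|f\|_{U^2}=p^{-1/4}$. Yet $\Delta_{h_1}\Delta_{h_2}\Delta_{h_3}f\equiv 1$, so $\|f\|_{U^3}=1$. Hence the bound $\max_{r\neq 0}|\widehat w(r)|\ll p^{-1/2}$ from part (1) cannot by itself give $U^3$-smallness of $w-1$.

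Your fallback, applying $\|F\|_{U^2}^4\le\|\widehat F\|_\infty^2\|\widehat F\|_2^2$ to $F=\Delta_{h_1}\cdots\Delta_{h_{k-2}}f$, only relocates the problem. It needs, for typical $h$, square-root cancellation in $\sum_x\prod_{\omega\in\{0,1\}^{k-2}}f(x+\omega\cdot h)\,e_p(-rx)$. That is, it needs decorrelation of shifted copies of $1_{P(\F_p)}$, which one-variable Weil bounds do not supply. So the real difficulty lies in (2), not in (1).

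\textbf{What is missing, and how the paper supplies it.} You need a multidimensional independence statement for the fibre counts $N(x+\omega\cdot h)$, where $N(t)=\#P^{-1}(t)$. The paper proceeds as follows.
\begin{itemize}
\item It writes $1_{P(\F_p)}=Q(N)$, where $Q$ interpolates $Q(0)=0$ and $Q(m)=1$ for $1\le m\le d$, and reduces to bounding $\|N^j-\mu_j\|_{U^k}$.
\item It expands this into the moments $N_B=\mathbb{E}_{x,h}\prod_{\omega\in B}N(x+\omega\cdot h)^j$, and recognises $p^{k+1}N_B$ as $|V_B(\F_p)|$ for $V_B=\{P(y_{\omega,i})=x+\omega\cdot h\}$.
\item Lang--Weil gives $N_B=C_B+O(p^{-1/2})$.
\item The geometric input, which the paper draws from the pairwise distinctness of the forms $(1,\omega)$, is that the number of top-dimensional geometrically irreducible components factors as $C_B=C_j^{|B|}$. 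Here $C_j=\mu_j+O(p^{-1/2})$ comes from Lang--Weil on $\{P(y_1)=\dots=P(y_j)=t\}$.
\item The alternating sum for $\|N^j-\mu_j\|_{U^k}^{2^k}$ then collapses to $O(p^{-1/2})$, which is where the exponent $p^{-1/2^{k+1}}$ comes from.
\end{itemize}

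\textbf{Part (1).} Your part (1) is sound, and more careful than the paper's. The paper identifies $\widehat w(r)$ with $\frac{1}{|P(\F_p)|}\sum_x e_p(-rP(x))$, which ignores the multiplicities $N(s)$. Your inclusion--exclusion $1_{P(\F_p)}=\sum_{j\ge1}(-1)^{j-1}\binom{N}{j}$, combined with Weil bounds on the components of $X_j$, is the right way to handle them. One slip: your first display $\sum_j\frac1j N_j$ is wrong if $N_j$ is an indicator; it should be $\sum_j N_j$.
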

\begin{remark}
    The method used to prove the bound $\|w-1\|_{U^k}=O_{d,k}(p^{-1/2^{k+1}})$ for $k\geq 3$ also works for $k=2$ and yields the same result. The Fourier-analytic proof mentioned below gives a stronger estimate as seen in the statement of the Proposition \ref{prop:gowers}.
\end{remark}

We proceed with the proof of Theorem \ref{thm:corner}.
\begin{proof}[Proof of Theorem \ref{thm:corner}]
We utilize the transference principle. Let $A\subseteq (\F_p)^2$ be a subset of density $\alpha$ such that $A$ contains a configuration of the form $(x,y),(x+P(z),y),(x,y+P(z))$ with $P$ is a polynomial of degree $d\geq 3$. By Lemma \ref{prop:weight}, it follows that
\[
|\Lambda(1_A,1_A,1_A)-\Lambda^{Model}(1_A,1_A,1_A)|\leq \|w-1\|_{U^3}.
\]
Applying Proposition \ref{prop:gowers} with $k=3$, we obtain
\[
\Lambda(1_A,1_A,1_A)=\Lambda^{Model}(1_A,1_A,1_A)+O_d(p^{-1/16}).
\]
Therefore, using the previous equality alongside a supersaturation result of Theorem \ref{lem:lincor} (see for example Appendix F of \cite{KKL}), we conclude that
\[
\gamma\gg \frac{1}{\exp((c_P\log p)^{1/600})}.
\]    
\end{proof}
\begin{remark}
    We can generalize this transference principle to arbitrary linear patterns. This means that if $A'\subseteq (\F_p)^D$ and $\textbf{v}_1,\dots,\textbf{v}_k\in\F_p^D$ be any vectors in $\F_p^D$, then by using Lemma \ref{prop:weight} and Proposition \ref{prop:gowers}, we get
    \[
|\Lambda(1_{A'},1_{A'},\dots,1_{A'})-\Lambda^{Model}(1_{A'},1_{A'},\dots,1_{A'})|\leq \|w-1\|_{U^{k+1}},
\]
where
\[
\Lambda(f_0,\dots,f_k)=\underset{\substack{\textbf{x},\textbf{v}_1,\dots,\textbf{v}_k\in \F_p^D\\z\in\F_p}}{\mathbb{E}} f_0(\textbf{x})f_1(\textbf{x}+z\textbf{v}_1)\dots f_k(\textbf{x}+z\textbf{v}_k)
\]
and
\[
\Lambda^{Model}(f_0,\dots,f_k)=\underset{\substack{\textbf{x},\textbf{v}_1,\dots,\textbf{v}_k\in \F_p^D\\z\in\F_p}}{\mathbb{E}} f_0(\textbf{x})f_1(\textbf{x}+P(z)\textbf{v}_1)\dots f_k(\textbf{x}+P(z)\textbf{v}_k)
\]
for any $1$-bounded functions $f_0,\dots,f_k$.

This implies that as long as the density of the set $A'$ that does not contain the specified linear pattern $\textbf{x},\textbf{x}+z\textbf{v}_1,\dots,\textbf{x}+z\textbf{v}_k$, is not polynomial in $p$ (some negative power of $p$), then $A'$ also does not contain the polynomial pattern $\textbf{x},\textbf{x}+P(z)\textbf{v}_1,\dots,\textbf{x}+P(z)\textbf{v}_k$ where the degree of the polynomial $P$ is less than $p^{1/2}.$

\end{remark}
It now remains to show the Proposition \ref{prop:gowers}.
\begin{proof}[Proof of Proposition \ref{prop:gowers}]
    To simplify our notation, let $S:=P(\F_p)$. Now instead of invoking the Inverse Theorem of Gowers Norm, we bound $\|w-1\|_{U^k}$ for $k\geq 3$ directly from the definition and using some algebraic geometry tools. For $k=2$ we use Lemma \ref{lem:Weil} instead. We proceed with the case $k=2$ first. It is well-known that for any function $f : \F_p\rightarrow \C,$
    \[
    \|f\|_{U^2}^2=\|\widehat{f}\|_{4}^2.
    \]
    If $f=w-1$, then by Lemma \ref{lem:Weil}
    \[
    \widehat{f}(0)=0
    \]
    and
    \[
    \widehat{f}(r)=\frac{1}{p}\sum_{x\in\F_p}f(x)e_p(-rP(x))=\frac{1}{|S|}\sum_{x\in \F_p}e_p(-rP(x))\ll_d p^{-1/2},\quad r\neq 0,
    \]
    which yields the desired bound for $k=2$.
    
    For $k\geq 3$, we are going to use the algebraic geometry tools that are mentioned in Section \ref{sec:prel}. By definition, using also the fact that $w-1$ is real-valued, we have that
    \begin{align*}
        \|w-1\|_{U^k}^{2^k}
        =\underset{x,h_1,\dots,h_k\in \F_p}{\mathbb{E}}\prod_{\omega\in\{0,1\}^k} (w-1)(x+\omega\cdot(h_1,\dots,h_k)).
    \end{align*}
    Now we denote $f=w-1$ and
    \[
    N(x):=|\{y\in \F_p : P(y)=x\}|.
    \] 
    By Lagrange interpolation, there exists a polynomial $Q(t)=\sum_{j=0}^d a_jt^j\in\Q[t]$ such that $Q(0)=0, Q(m)=1$ for $m=1,\dots,d$. Since $N(x)\in\{0,\dots,d\}$, we have
    \[
    1_S(x)=Q(N(x))
    \]
    for any $x\in\F_p$. Note that the coefficients $a_j$ depend only on $d$. If we denote by
    \[
    \mu_j=\frac{1}{p}\sum_{x\in\F_p}N(x)^j,
    \]
    we get the fact that $\displaystyle \sum_{j=1}^da_jC_P \mu_j=1$, so
    \[
    f(x)=\sum_{j=1}^d a_jC_P(N(x)^j-\mu_j).
    \]
    Using the fact that the $U^k$ is norm for any $k\geq 3$, we have
    \[
    \|f\|_{U^k}\leq C_d\left(\max_{1\leq j\leq d} \|N^j-\mu_j\|_{U^k}\right)
    \]
    for some constant $C_d$ that depends only on $d$, since we have that $C_P$ and $a_j,j=1,\dots,d$ depend only on $d$ and are bounded. Now we shift our focus to bound $\|N^j-\mu_j\|_{U^k}$ and we claim that this quantity is $O_{d,k}(p^{-1/2^{k+1}})$.

    Fix $j\in\{1,\dots,d\}$ and let $g=g_j:=N^j-\mu_j$. By expanding the definition of the Gowers norm, we obtain
    \begin{align}
    \label{eq:Gow}
    \|g\|_{U^k}^{2^k}=\sum_{B\subseteq \{0,1\}^k}(-1)^{2^k-|B|}\mu_j^{2^k-|B|}N_B
    \end{align}
    where
    \[
N_B:=\underset{x,h_1,\dots,h_k\in\F_p}{\mathbb{E}}\prod_{\omega\in B} N(x+\omega\cdot(h_1,\dots,h_k))^j.
    \]
    For nonempty subset $B\subseteq \{0,1\}^k$, we define the variety
    \[
    V_B:=\{(x,h_1,\dots,h_k,(y_{\omega,i})_{\omega\in B,1\leq i\leq j})\in \F_p^{k+1}\times \F_p^{j|B|} : P(y_{\omega,i})=x+\omega\cdot(h_1,\dots,h_k), \omega\in B,1\leq i\leq j\}.
    \]

    We claim that the dimension of $V_B$ is $k+1$. The projection $\pi : V_B\rightarrow \F_p^{k+1},$ which sends $(x,h_1,\dots,h_k,(y_{\omega,i})_{\omega\in B,1\leq i\leq j})$ to $(x,h_1,\dots,h_k)$ has finite fibers over $\overline{\F}_p$ because each equation $P(y)=t$ has $d$ solutions. More detailed information on this topic can be found in \cite[Chapter 5]{AM}. To verify our claim, note that there are $j|B|$ such equations, so the number of tuples $(y_{\omega,i})_{\omega\in B,1\leq i\leq j}$ satisfying the system of equations is $d^{j|B|}$. Since $V_B$ is a subset of an affine space of dimension $k+1+j|B|$ and is defined by $j|B|$ equations, and because the projection $\pi$ has finite fibers means that its dimension is zero over $\overline{\F}_p$, we have
    \[
    \dim V_B\leq k+1+\dim(\text{fiber of }\pi)=k+1.
    \]
    But we have $\pi$ is surjective, so $\dim V_B\geq k+1$.
    
    Now on one hand, we have that
    \[
    |V_B(\F_p)|=\sum_{x,h_1,\dots,h_k\in\F_p}\prod_{\omega\in B}N(x+\omega\cdot(h_1,\dots,h_k))^j=p^{k+1}N_B
    \]
    by the definition of $N_B.$
    
    On the other hand, using Lang-Weil estimate from Proposition \ref{lem:LangWeil}, we have
    \[
    |V_B(\F_p)|=C_Bp^{k+1}+O_{d,k}(p^{k+1/2})
    \]
    where $C_B$ is the number of absolutely/geometrically irreducible components of $V_B$ of dimension $k+1$. Thus we get 
    \begin{align}
    \label{eq:fibre}
    C_B=N_B+O_{d,k}(p^{-1/2}).
    \end{align}
    The next step is to determine the value of $C_B$.

    To do this, we consider the variety
    \[
    X_j=\{(t,y_1,\dots,y_j)\in\F_p\times \F_p^j : P(y_i)=t,1\leq i\leq j\}.
    \]
    We have that $|X_j(\F_p)|=\displaystyle \sum_{t\in \F_p} N(t)^j=p\mu_j$. By an argument similar to the one used for $V_B$, applying Proposition \ref{lem:LangWeil} again yields
    \[
    |X_j(\F_p)|=C_jp+O_{d,j}(p^{1/2}).
    \]
    This gives us $\mu_j=C_j+O_{d,j}(p^{-1/2}).$ 

    The final relation needed to complete our calculations is an expression for $C_B$ in terms of $\mu_j$, which will allow us to obtain a bound for \eqref{eq:Gow}. 
    
    To achieve this, we need to consider the relation of $V_B$ and $X_j$ geometrically as follows. Let $\displaystyle L : \F_p^{k+1}\rightarrow \F_p^B$ be the linear map defined by $L(x,h_1,\dots,h_k)=(x+\omega\cdot(h_1,\dots,h_k))_{\omega\in B}$. Let $T$ denote the image of $L$. If we let $R=\dim_{\F_p}\text{span}\{(1,\omega) : \omega\in B\}$, we have a linear parameterization $\varphi : \F_p^R\rightarrow T$ such that the equations defining $V_B$ can be rewritten as
    \begin{align}
    \label{eq:polyeq}
    P(y_{\omega,i})=\varphi_{\omega}(u),\quad u\in \F_p^R,\omega\in B,1\leq i\leq j.
    \end{align}

    Here we have that for $\omega\in B$, the map $\varphi_{\omega}$ is defined by $\varphi_{\omega}:=\pi_{\omega}\circ \varphi : \F_p^R\rightarrow \F_p$ where $\pi_{\omega} : T\rightarrow \F_p$ is projection onto the $\omega-$coordinate.
    
    If we let $W_B$ denote the variety defined by \eqref{eq:polyeq}, then $V_B$ is isomorphic to $\F_p^{k+1-R}\times W_B$. To see this, let $v_1,\dots,v_{k+1-R}$ be a basis of $\ker L$ and $u_1,\dots,u_R$ be a basis for the orthogonal complement of $\ker L$ in $\F_p^{k+1}$. Then, writing an element in terms of these bases as $u,v)$ where $u = (u_1,\dots,u_R)$ and $v = (v_1,\dots,v_{k+1-R})$, we have
    \[
    L(u,v)=(\varphi_1(u),\dots,\varphi_{|B|}(u))
    \]
    after re-indexing elements of $B$. Since $\varphi_{\omega}$ is linear in $u_1,\dots,u_R$ and does not depend on $v_1,\dots,v_{k+1-R}$ for any $\omega$, and since $v_1,\dots,v_{k+1-R}$ lie in the kernel of $L$, we have $L(u,v)=L(u,0)$. This means that we can rewrite the equations defining $V_B$ by
    \[
    P(y_{\omega,i})=\varphi_{\omega}(u),\quad \omega\in B,1\leq i\leq j,
    \]
    which is \eqref{eq:polyeq}.
    Therefore, by our definition of $W_B$, we have $V_B\cong \F_p^{k+1-R}\times W_B$.
    
    We also claim that $W_B$ is the pullback of the product $X_j^B:=\displaystyle \prod_{\omega\in B}X_j$ along the linear embedding $\F_p^R\hookrightarrow \F_p^{B}$ where the linear embedding $\phi : \F_p^R\rightarrow \F_p^B$ is given by
    \[
    \phi(u)=(\varphi_{\omega}(u))_{\omega\in B}.
    \]
    We have that a point in $X_j^B$ is a tuple $((t_{\omega},y_{\omega,1},\dots,y_{\omega,j}))_{\omega\in B}$ satisfying $P(y_{\omega,i})=t_{\omega}$ for any $i=1,\dots,j$. Let $\pi_B : X_j^B :\rightarrow \F_p^B$ denote the projection onto the $t_{\omega}$ coordinates. 
    We claim that $W_B\cong X_j^B\times_{\F_p^B}\F_p^R:=\{((t_{\omega},y_{\omega,i})_{\omega\in B},u)\in X_j^B\times \F_p^R : t_{\omega}=\varphi_{\omega}(u)\}$. Note that the conditions
    \[
    t_{\omega}=\varphi_{\omega}(u)
    \]
    determine each $t_{\omega}$ uniquely from $u.$ So we have the projection
    \[
    ((t_{\omega},y_{\omega,i})_{\omega\in B},u)\mapsto (u,(y_{\omega,i})_{\omega\in B})
    \]
    is an isomorphism onto its image. The remaining equations defining $X_j^B$ become
    \[
    P(y_{\omega,i})=t_{\omega}=\varphi_{\omega}(u),
    \]
    which is exactly $W_B$. So we get that $W_B$ is the claimed pullback.

    Coming back to our calculations, we have the forms $\omega\mapsto (1,\omega)$ are pairwise distinct. This means that the linear section is generic on the dense open locus where the fibres of $X_j\rightarrow \F_p$ are \'{e}tale by the fact that the derivative of $P$ is not identically zero. This gives us $N_B=C_j^{|B|}$.

    Recalling \eqref{eq:fibre}, we can now conclude that
    \[
    C_B=C_j^{|B|}+O_{d,k}(p^{-1/2})=\mu_j^{|B|}+O_{d,k}(p^{-1/2}).
    \]
    Substituting what we have to \eqref{eq:Gow}, we get
    \begin{align*}
    \|g\|_{U^k}^{2^k}&=\sum_{B\subseteq\{0,1\}^k}(-1)^{2^k-|B|}\mu_j^{2^k-|B|}\left(\mu_j^{|B|}+O_{d,k}(p^{-1/2})\right)\\
    &=\mu_j^{2^k}\sum_{B\subseteq \{0,1\}^k}((-1)^{2^k-|B|})+O_{d,k}(p^{-1/2})=O_{d,k}(p^{-1/2}).
    \end{align*}
    Taking the $2^k$-th root, by our earlier observation that $\|f\|_{U^k}\ll_d \|g\|_{U^k}$, we end up with
    \[
    \|f\|_{U^k}=O_{d,k}(p^{-1/2^{k+1}}).
    \]
\end{proof}
\begin{remark}
    The proof above works for any nondegenerate polynomial $P$ and $k\geq 3$, so in particular in the case when $k\geq 3$, we only need $d<p$.
\end{remark}

\section*{Acknowledgement}
The author would like to thank Fernando Xuancheng Shao for his advice and suggestions. The author also thanks Borys Kuca for his ideas and suggestions. The author is partially supported by NSF grant DMS-2452462.
\section*{AI Disclosure}
The paper is written by human. Gemini 3.5 Flash was used for copyediting assistance during the final edits of this paper. The author used Chat GPT 5.5 Thinking Mode to find \cite{AM} as reference for dimension of variety. All mathematical arguments and results in this paper were human-generated.

\end{document}